\theoremstyle{definition}
\newcounter{counter}
\numberwithin{counter}{section}
\newtheorem{theorem}[counter]{Theorem}
\newtheorem{lemma}[counter]{Lemma}
\newtheorem*{theorem*}{Theorem}
\newtheorem*{lemma*}{Lemma}
\DeclareFixedFont{\ttb}{T1}{txtt}{bx}{n}{8} % for bold
\DeclareFixedFont{\ttm}{T1}{txtt}{m}{n}{8}  % for normal
\definecolor{deepblue}{rgb}{0,0,0.5}
\definecolor{deepred}{rgb}{0.6,0,0}
\definecolor{deepgreen}{rgb}{0,0.5,0}
\newcommand\pythonstyle{\lstset{
language=Python,
basicstyle=\ttm,
otherkeywords={self},             % Add keywords here
keywordstyle=\ttb\color{deepred},
emph={Q,to_special,normalise,iterative_reduce,__init__},          % Custom highlighting
emphstyle=\ttb\color{deepblue},    % Custom highlighting style
stringstyle=\color{deepgreen},
commentstyle=\color{deepgreen},
showstringspaces=false,
%literate={\ \ \ \ }{{\ \ }}1
}}
\begin{document}
\title{An algebraic semi-automated proof of the fundamental identity of Jordan algebras}
\author{John van de Wetering \\
       Radboud University Nijmegen, Netherlands\\
       \texttt{john@vdwetering.name}}
\date{\today}
\maketitle

\begin{abstract}
    The fundamental identity of quadratic Jordan algebras $Q_{Q_a b} = Q_aQ_bQ_a$ is commonly proven as a consequence of MacDonalds theorem or using more analytic methods. In this short note we give a self-contained purely algebraic proof using just a few easily proven identities and a Python script that follows a simple randomised logic to reduce expressions of Jordan operators.
\end{abstract}

\section{Introduction}
A \emph{Jordan algebra} $(V,*)$ is a vector space over a field $F$ (that we will always take to have any characteristic except 2) equipped with a bilinear operation $*$ such that for any $a,b\in V$ we have $a*b = b*a$ (commutativity) and $a*(b*a^2) = (a*b)*a^2$ (the Jordan identity) where $a^2:= a*a$. If $(V,\cdot)$ is a associative algebra (not necessarily commutative) then it becomes a Jordan algebra when equipped with the product $a*b:= \frac{1}{2}(a\cdot b + b\cdot a)$. A Jordan algebra arising from an associative algebra in this way is called \emph{special}.

For an element $a\in V$ in a Jordan algebra we define its \emph{quadratic representation} by $Q_a b:= 2a*(a*b) - a^2*b$. If $V$ is special this reduces to $Q_a b = a\cdot b\cdot a$, which is why its called quadratic. It is a well-known fact that the quadratic representation satisfies the \emph{fundamental identity}: $Q_{Q_a b} = Q_aQ_bQ_a$. This identity is readily seen to be true for special Jordan algebras as it becomes $(a\cdot b\cdot a)\cdot c \cdot (a\cdot b\cdot a) = a\cdot (b\cdot(a\cdot c\cdot a)\cdot b)\cdot a$. Many textbooks \cite{hanche1984jordan,alfsen2012geometry,chu2011jordan,mccrimmon2006taste} prove the fundamental identity as a consequence of \emph{MacDonalds theorem}. This theorem states that any polynomial identity in 3 variables that is linear in at least one variable is true for any Jordan algebra if and only if it is true for any special Jordan algebra. Other textbooks prove it when the field is the real numbers using methods from analysis \cite{faraut1994analysis}.

In this note we give a fully algebraic proof of the fundamental identity that only uses a few basic identities derived from the Jordan identity. We use a straightforward Python script (available on GitHub \cite{sourcecode} or in the source code listing below) that applies these identities as rewrite rules in a random, mostly greedy way to reduce expressions. And in this way we can show that the fundamental identity is derived. Although using a script might not be the most elegant way, it does show that you don't need much cleverness to derive the fundamental identity, and that instead it can be derived in a mostly mechanical way.

% The goal of this section will be to show that $Q_a Q_b Q_a = Q_{Q_a b}$ for arbitrary $a,b \in V$. In contrast to approaches in other works we will do this completely algebraically and without reference to MacDonalds theorem. MacDonalds theorem is a `meta-theorem' about Jordan algebras that states informally that any polynomial identity in two variables that is true for all \emph{special} Jordan algebras, is true for any Jordan algebra. Since the fundamental equality is readily verified in any special Jordan algebra, the fundamental theorem indeed follows from MacDonalds theorem. Our approach consists of three steps:
% \begin{enumerate}
%     \item First we derive an equation (see \eqref{eq:reduce}) that allows us to reduce the multiplication operator of a complex expression into a polynomial of simpler expressions. For instance $T_{a^3} = T_{a(aa)} = 3T_a^2 T_a - 2 T_a^3$. In particular, any multiplication operator of an element generated by $a$ and $b$ can be written as a polynomial in $T_a,T_b,T_{a^2}, T_{ab}$ and $T_{b^2}$.
%     \item Second, we find expressions for the commutators of these basic building blocks. With these commutation rules we can demonstrate equality of a great many equations.
%     \item Third, using a simple Python script that randomly applies the commutation rules until the entire expression is reduced to zero, we can demonstrate the fundamental equality.
% \end{enumerate}

\section{Identities}
For the rest of this note we will let $(V,*)$ be a Jordan algebra over a field $F$ not of characteristic 2 and $a,b,c,d\in V$. We let $\lambda$ always denote a scalar from $F$. We will write $ab$ for $a*b$ and $a^2b$ for $a^2*b$. We let $T_a:V\rightarrow V$ be the \emph{multiplication operator} $T_a(b):= a*b$, so that $Q_a = 2T_a^2 - T_{a^2}$. The operator $Q_a$ is a special case of the \emph{triple product} $Q_{a,b}:= T_aT_b + T_bT_a -T_{ab}$ where $Q_a = Q_{a,a}$. We let $[A,B]:= AB-BA$ denote the commutator of maps $A,B:V\rightarrow V$. Using commutativity we can recast the Jordan identity to $a*(a^2*b) = a^2*(a*b)$. Written this way it is clear that this identity is equivalent to
\begin{equation}\label{eq:jordan}
	[T_a,T_{a^2}]=0.
\end{equation}
By bilinearity we of course have $T_{a+\lambda b} = T_a + \lambda T_b$ and we can expand $(a+\lambda b)^2 = (a+\lambda b)*(a+\lambda b) = a^2 + \lambda^2 b^2 + 2\lambda ab$. With these observations in hand we can prove the \emph{linearised Jordan equations}:
\begin{lemma}
	Let $a,b,c$ be arbitrary elements of a Jordan algebra, then 
	\begin{equation}\label{eq:linjordan1}
		[T_b, T_{a^2}] + 2[T_a, T_{ab}] = 0.
	\end{equation}
	\begin{equation}\label{eq:linjordan2}
		[T_a,T_{bc}] + [T_b, T_{ac}] + [T_c, T_{ab}] = 0
	\end{equation}
\end{lemma}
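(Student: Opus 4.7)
The plan is to prove both identities by polarization of the Jordan identity $[T_a, T_{a^2}] = 0$, using only the bilinearity facts $T_{a+\lambda b} = T_a + \lambda T_b$ and $(a+\lambda b)^2 = a^2 + 2\lambda ab + \lambda^2 b^2$ that have been already recorded.

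For (\ref{eq:linjordan1}), I would substitute $a \mapsto a + \lambda b$ into (\ref{eq:jordan}). The bracket becomes
\[
  [T_a + \lambda T_b,\; T_{a^2} + 2\lambda T_{ab} + \lambda^2 T_{b^2}] = 0,
\]
which, after expanding the commutator bilinearly and discarding the $\lambda^0$ term $[T_a, T_{a^2}]$ and the $\lambda^3$ term $[T_b, T_{b^2}]$ (both zero by (\ref{eq:jordan})), collapses to
\[
  \lambda \bigl(2[T_a,T_{ab}] + [T_b, T_{a^2}]\bigr) + \lambda^2 \bigl([T_a, T_{b^2}] + 2[T_b,T_{ab}]\bigr) = 0.
\]
Since the field has characteristic $\neq 2$, the values $\lambda = 1$ and $\lambda = -1$ are distinct, so the resulting two equations form a Vandermonde system whose determinant $-2$ is invertible. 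Hence both coefficients vanish, which is exactly (\ref{eq:linjordan1}) (the second coefficient is (\ref{eq:linjordan1}) with the roles of $a$ and $b$ exchanged).

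For (\ref{eq:linjordan2}), I would repeat the trick, this time polarising the freshly proven identity (\ref{eq:linjordan1}) in its quadratic variable $a$ by the substitution $a \mapsto a + \lambda c$. Using $(a+\lambda c)^2 = a^2 + 2\lambda ac + \lambda^2 c^2$ and $(a+\lambda c)b = ab + \lambda cb$, the left-hand side becomes a polynomial in $\lambda$ of degree two whose $\lambda^0$ coefficient is (\ref{eq:linjordan1}) itself and whose $\lambda^2$ coefficient is (\ref{eq:linjordan1}) applied to $b, c$; both vanish. What remains is $2\lambda$ times
\[
  [T_a, T_{bc}] + [T_b, T_{ac}] + [T_c, T_{ab}],
\]
and setting $\lambda=1$ then dividing by $2$ (again using char$\neq 2$) gives (\ref{eq:linjordan2}).

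There is no real obstacle here: the argument is a routine two-step polarisation. The only point to watch is the small-field subtlety when extracting coefficients of a $V$-valued polynomial in $\lambda$; avoiding this is precisely why I would evaluate at $\lambda = \pm 1$ (together with the already-known identities killing the $\lambda^0$ and top-degree terms) rather than appealing to a general ``coefficients of a polynomial vanish'' argument that would require $|F|$ large.
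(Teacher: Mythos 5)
Your proof is correct and is essentially the paper's own argument: substituting $a\mapsto a+\lambda b$ and evaluating at $\lambda=\pm 1$ is exactly the paper's substitution $d=a\pm b$ into $[T_d,T_{d^2}]=0$ followed by subtracting the two equations and dividing by $2$, and the second identity is likewise obtained there by replacing $a$ with $a\pm c$ in the first. The extra care you take about extracting coefficients over a small field is a nice touch but does not change the substance.
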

\begin{proof}
	We will take the equality $[T_d,T_{d^2}] = 0$ and let $d=a\pm b$: $[T_{a\pm b},T_{(a\pm b)^2}]~=~0$. After expanding the terms we are left with
	$$[T_a, T_{a^2}] \pm [T_b, T_{b^2}] \pm \left([T_b, T_{a^2}] + 2 [T_a, T_{ab}]\right) +\left([T_a, T_{b^2}] + 2 [T_b, T_{ab}]\right) = 0.$$
	Subtracting the equation for $d=a+b$ from the equation for $d=a-b$ and dividing the result by 2 (here we use that the field is not of characteristic 2) we have the desired equation.

	We prove the second equation by taking the first equation and replacing $a$ by $a\pm c$ and using the same trick.
\end{proof}

By exploiting a symmetry in equation \eqref{eq:linjordan2} we can prove the following equation as well.
\begin{lemma}
	Let $a,b,c$ be arbitrary elements of a Jordan algebra, then
	\begin{equation}\label{eq:reduce}
		T_{a(bc)} = T_aT_{bc} + T_bT_{ac} + T_cT_{ab} - T_bT_aT_c - T_cT_aT_b.
	\end{equation}
\end{lemma}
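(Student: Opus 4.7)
The plan is to evaluate both sides of \eqref{eq:reduce} on an arbitrary element $d\in V$, thereby turning the operator identity into a scalar identity to be checked inside $V$. Spelled out, the target becomes
$$(a(bc))d = a((bc)d) + b((ac)d) + c((ab)d) - b(a(cd)) - c(a(bd)),$$
so everything is written in terms of the ordinary Jordan product, which I can push around with commutativity.

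Next I would use equation \eqref{eq:linjordan2} in two different ways. First, applying \eqref{eq:linjordan2} directly to $d$ gives
$$a((bc)d) + b((ac)d) + c((ab)d) = (bc)(ad) + (ac)(bd) + (ab)(cd),$$
which rewrites the first three terms on the right-hand side of the target scalar identity. Second, and this is the ``symmetry'' hinted at in the lemma, I would apply \eqref{eq:linjordan2} again but to the triple $(d,b,c)$ evaluated on $a$ — this is allowed since \eqref{eq:linjordan2} holds for any three elements of $V$. After applying commutativity of $*$ throughout (so that $(bc)a=a(bc)$, $(cd)a=a(cd)$, etc., and the right-hand side becomes the same symmetric sum over pairings of $\{a,b,c,d\}$), this second instance reads
$$(a(bc))d + b(a(cd)) + c(a(bd)) = (bc)(ad) + (ac)(bd) + (ab)(cd).$$

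Combining the two relations, the common right-hand side $(bc)(ad)+(ac)(bd)+(ab)(cd)$ cancels and yields precisely the target scalar identity; since $d$ was arbitrary this gives the operator identity \eqref{eq:reduce}. There is no real obstacle beyond bookkeeping: the key conceptual point is simply that because \eqref{eq:linjordan2} is an identity in three arbitrary variables, one is free to permute the role of the ``test vector'' $d$ with any of the three original variables, and commutativity of $*$ then matches up both sides. The routine commutativity-based rewrites are the only thing that could trip one up.
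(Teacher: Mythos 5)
Your proposal is correct and is essentially the paper's own argument: the paper likewise applies \eqref{eq:linjordan2} to a test element $d$, observes that the resulting right-hand side $(bc)(ad)+(ac)(bd)+(ab)(cd)$ is invariant under swapping $a$ and $d$, and equates the two left-hand sides after commutativity rewrites. Your phrasing of the symmetry as a second instance of \eqref{eq:linjordan2} with the triple $(d,b,c)$ evaluated on $a$ is just a more explicit rendering of the same step.
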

\begin{proof}
	Apply the operators of \eqref{eq:linjordan2} to an element $d$ and bring all the negative terms to the right to get
	$$a((bc)d) + b((ac)d) + c((ab)d) = (bc)(ad) + (ac)(bd) + (ab)(cd).$$
	Observe that the righthandside is invariant under an interchange of $a$ and $d$ so that the lefthandside must be as well. This leads to the equality
	$$a((bc)d) + b((ac)d) + c((ab)d) = d((bc)a) + b((dc)a) + c((db)a) = ((bc)a)d + b(a(cd)) + c(a(bd))$$
	where we have used the commutativity of the product to move $d$ to the end in the last equality.
	Translating this back into multiplication operators, using that this equality holds for all $d$, and bringing some terms to the other side then gives the desired equation.
\end{proof}

We will refer to equation \eqref{eq:reduce} as the \emph{normalisation equation}: On the left there is a multiplication operator of a triple term $a(bc)$, while on the right there are only products of multiplication operators consisting of double terms (such as $ab$) and single terms (such as $c$). For instance by taking $a=b=c$ we can normalise $T_{a^3} = T_{a(aa)}$ to $3T_a^2 T_a - 2 T_a^3$. By repeatedly invoking this equation any product operator can be normalised to a polynomial of product operators of double and single terms.

We need to find some more commutator identities. We can already express the commutator of $T_b$ and $T_{a^2}$ and that of $T_a$ and $T_{ab}$ using equation \eqref{eq:linjordan1}. Using \eqref{eq:linjordan2} we can find the other commutators we need.

\begin{lemma}
	Let $a$ and $b$ be arbitrary elements in a Jordan algebra, then
	\begin{equation}\label{eq:commuteab_aa}
		[T_{ab}, T_{a^2}] = 2(T_a^2[T_a,T_b] + [T_a,T_b]T_a^2 -[T_aT_{a^2}, T_b])
	\end{equation}
	\begin{equation}\label{eq:commuteaa_bb}
		[T_{a^2},T_{b^2}] = 4 (T_bT_aT_bT_a - T_aT_bT_aT_b + T_aT_{ab}T_b - T_bT_{ab}T_a)
	\end{equation}
\end{lemma}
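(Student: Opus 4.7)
The plan is to treat both identities with the same two-step recipe. First, use \eqref{eq:linjordan1} with an appropriate substitution to rewrite the target commutator as $-2[T_x, T_{x(yz)}]$ for a suitable triple $(x,y,z)$. Second, invoke the normalisation equation \eqref{eq:reduce} to replace $T_{x(yz)}$ by an explicit polynomial in the operators $T_u$ and $T_{uv}$, and then expand the outer commutator by Leibniz. Most of the resulting terms either vanish because $[T_u, T_{u^2}] = 0$ or simplify to commutators of the form $[T_u, T_v]$ and $[T_u, T_{uv}]$ which are either primitive or already controlled by \eqref{eq:linjordan1}.

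For \eqref{eq:commuteab_aa} I would substitute $b \mapsto ab$ in \eqref{eq:linjordan1} to obtain $[T_{ab}, T_{a^2}] = -2[T_a, T_{a(ab)}]$, and then normalise $T_{a(ab)}$ via \eqref{eq:reduce} applied to the triple $(a,a,b)$. The result has four summands, and after distributing $[T_a, -]$ the only surviving commutators are $[T_a, T_b]$ and $[T_a, T_{ab}]$. To match the stated right-hand side one expands $[T_a T_{a^2}, T_b] = T_a [T_{a^2}, T_b] + [T_a, T_b] T_{a^2}$ and rewrites $[T_{a^2}, T_b]$ via \eqref{eq:linjordan1}; the coefficients then agree term by term.

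For \eqref{eq:commuteaa_bb} I would substitute $a \mapsto b$ and $b \mapsto a^2$ in \eqref{eq:linjordan1} to get $[T_{a^2}, T_{b^2}] = -2[T_b, T_{b(a^2)}]$, and normalise $T_{b(a^2)}$ via \eqref{eq:reduce} applied to $(b,a,a)$. Distributing $[T_b, -]$ relies on three key cancellations: the two middle Leibniz contributions in $[T_b, T_a T_{ab}]$ cancel, the two $T_a T_b^2 T_a$ pieces from $[T_b, T_a T_b T_a]$ telescope, and finally the $T_b T_a T_{ab}$ terms produced by the first two expansions cancel against each other once the overall factor $-2$ is applied. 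What remains is exactly the claimed right-hand side $4(T_bT_aT_bT_a - T_aT_bT_aT_b + T_aT_{ab}T_b - T_bT_{ab}T_a)$.

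The main obstacle is bookkeeping rather than ingenuity: every move is mechanical (Leibniz, the vanishing of $[T_u, T_{u^2}]$, or a substitution derived earlier), but roughly a dozen small terms must be tracked and recombined in each case. This is precisely the kind of normal-form reduction the Python rewriter in the paper automates; the only genuinely human decisions are the initial substitution into \eqref{eq:linjordan1} and the choice of triple passed to \eqref{eq:reduce}.
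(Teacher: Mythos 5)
Your proof is correct, and it follows the same overall template as the paper: substitute into a linearised Jordan equation so that the target commutator becomes a commutator against a triple term, normalise that term with \eqref{eq:reduce}, expand by Leibniz, and clean up using $[T_a,T_{a^2}]=0$ and \eqref{eq:linjordan1}. For \eqref{eq:commuteaa_bb} your derivation is the paper's up to swapping $a$ and $b$: the paper writes $[T_{a^2},T_{b^2}]=2[T_a,T_{b^2a}]$ and normalises $T_{a(bb)}$, while you write it as $-2[T_b,T_{b(aa)}]$ and normalise $T_{b(aa)}$; both land on the same cancellations and the same final appeal to \eqref{eq:linjordan1}. For \eqref{eq:commuteab_aa}, however, your route is genuinely different and somewhat more economical. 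The paper starts from \eqref{eq:linjordan2} with $a\mapsto a^2$, $c\mapsto a$, which forces it to normalise both $T_{a^3}$ and $T_{a^2b}$ and then to recognise a leftover multiple of $T_a(2[T_a,T_{ab}]+[T_b,T_{a^2}])$ as zero. Your substitution $b\mapsto ab$ into \eqref{eq:linjordan1} gives $[T_{ab},T_{a^2}]=-2[T_a,T_{a(ab)}]$ directly, so only one normalisation is needed; I checked that $-2[T_a,\,2T_aT_{ab}+T_bT_{a^2}-T_a^2T_b-T_bT_a^2]$ does expand to the stated right-hand side once $[T_aT_{a^2},T_b]$ is unpacked and $[T_{a^2},T_b]=2[T_a,T_{ab}]$ is substituted. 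What each choice buys: the paper's derivation keeps the commutator lemmas flowing from the single fully symmetric identity \eqref{eq:linjordan2}, while yours minimises bookkeeping by exploiting that \eqref{eq:linjordan1} already holds with the arbitrary element $ab$ in place of $b$.
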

\begin{proof}
	For the first equation we start with \eqref{eq:linjordan2} with $a$ replaced by $a^2$ and $c$ by $a$. This expression involves a $T_{a^2b}$ and $T_{a^3}$ term that need to be normalised using equation \eqref{eq:reduce}. Grouping the terms in a clever way the equation is then
	$$[T_{ab},T_{a^2}] = 2 (T_a^2[T_a,T_b] + [T_a,T_b]T_a^2 - [T_aT_{a^2},T_b])+ T_a(2[T_a,T_{ab}] + [T_b,T_{a^2}])$$
	which agrees with the desired equation except for the term $T_a(2[T_a,T_{ab}] + [T_b,T_{a^2}])$ which is zero by equation \eqref{eq:linjordan1}.

	For the second equation we take \eqref{eq:linjordan1} with $b$ replaced by $b^2$ to get the equation $[T_{a^2},T_{b^2}] = 2[T_a,T_{b^2a}]$. Reducing $T_{b^2a}$ using equation \eqref{eq:reduce} the righthandside becomes 
	$$4(T_b T_a T_b T_a - T_a T_bT_a T_b - T_bT_{ab}T_a) + 2T_a([T_a,T_{b^2}] + 2 T_bT_{ab}).$$ 
	By interchanging $a$ and $b$ in equation \eqref{eq:linjordan1} we get $[T_a,T_{b^2}] + 2 T_bT_{ab} = 2T_{ab}T_b$, which when applied to the displayed equation gives the desired result.
\end{proof}

The equation $Q_{a^2} = 2L_{a^2}^2 - L_{a^2*a^2}$ can be normalised to $L_{a^2}^2 + 4 L_a^4 - 4 L_{a^2}L_a^2$. This can at a glance be seen to be equal to $Q_a^2$ by usage of the Jordan equation \ref{eq:jordan}. With a few more applications of the normalisation equation \eqref{eq:reduce} we can also derive that $Q_a^3 = Q_{a^3}$. We will now linearise these equations to get some new identities.

\begin{lemma}\label{lem:quad}
	Let $a$ and $b$ be arbitrary elements in a Jordan algebra.
	\begin{equation}\label{eq:jordanquad1}
		 4Q_{a,b}^2 = 4Q_{ab} + 2Q_{a^2,b^2} -Q_aQ_b - Q_bQ_a
	\end{equation}
	\begin{equation}\label{eq:jordanquad2}
    \begin{aligned}
        &Q_a^2Q_b + Q_aQ_bQ_a + Q_bQ_a^2 + 4Q_{a,b}^2Q_a +4Q_aQ_{a,b}^2 + 4Q_{a,b}Q_aQ_{a,b} \\
         =& Q_{a^2b} + 4Q_{a(ab)} + 4Q_{a^2b,a(ab)} + 4 Q_{a^3,b(ab)} + 2Q_{a^3,b^2a}
    \end{aligned}
	\end{equation}
	Note: $Q_{a,b}:= T_aT_b+T_bT_a - T_{ab}$ should not be confused with $Q_{ab}:= 2T_{ab}^2 - T_{(ab)^2}$.
\end{lemma}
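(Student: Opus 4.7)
The plan is to derive both identities by linearising the two power identities $Q_a^2 = Q_{a^2}$ and $Q_a^3 = Q_{a^3}$ recalled just before the lemma, via the substitution $a \mapsto a + \lambda b$ and comparison of coefficients of $\lambda^2$. Two auxiliary facts are needed. First, from $Q_a = 2T_a^2 - T_{a^2}$, bilinearity of $T$, and $(a+\lambda b)^2 = a^2 + 2\lambda ab + \lambda^2 b^2$, one obtains
$$Q_{a + \lambda b} = Q_a + 2\lambda Q_{a,b} + \lambda^2 Q_b.$$
Second, since $Q_{a,b}$ is the polarisation of $Q_a$, the formula $Q_{x_1 + \cdots + x_n} = \sum_i Q_{x_i} + 2\sum_{i<j} Q_{x_i, x_j}$ holds and will be used repeatedly.

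For \eqref{eq:jordanquad1}, I would substitute $a \mapsto a+\lambda b$ into $Q_a^2 = Q_{a^2}$. Squaring the expansion above, the $\lambda^2$ coefficient on the left is $Q_a Q_b + Q_b Q_a + 4Q_{a,b}^2$. On the right, $Q_{(a+\lambda b)^2}$ expanded by polarisation from $(a+\lambda b)^2 = a^2 + 2\lambda ab + \lambda^2 b^2$ contributes a $\lambda^2$ coefficient of $4 Q_{ab} + 2 Q_{a^2, b^2}$. Equating and rearranging immediately gives \eqref{eq:jordanquad1}.

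For \eqref{eq:jordanquad2}, I would play the same game on $Q_a^3 = Q_{a^3}$. Cubing $Q_a + 2\lambda Q_{a,b} + \lambda^2 Q_b$ and collecting $\lambda^2$ picks up exactly the three orderings of $(Q_a, Q_a, Q_b)$ together with the three orderings of $(Q_a, Q_{a,b}, Q_{a,b})$, with the $2$'s squaring to produce the factors of $4$ on the latter trio; this gives precisely the LHS. On the right, $(a+\lambda b)^3 = (a+\lambda b)*(a+\lambda b)^2$, and using only commutativity (carefully, since $*$ is not associative) to rewrite $ba^2 = a^2 b$ and $a b^2 = b^2 a$ yields
$$(a+\lambda b)^3 = a^3 + \lambda(a^2 b + 2 a(ab)) + \lambda^2 (b^2 a + 2 b(ab)) + \lambda^3 b^3.$$
Applying $Q$ via polarisation and selecting the $\lambda^2$ contribution gives $Q_{a^2 b + 2a(ab)} + 2\, Q_{a^3,\, b^2 a + 2 b(ab)}$, which upon expansion by bilinearity of $Q_{\cdot,\cdot}$ produces exactly the RHS of \eqref{eq:jordanquad2}.

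The main obstacle is entirely bookkeeping. One must avoid any silent appeal to associativity when writing $(a+\lambda b)^3$; one must correctly enumerate all six length-three monomials of total weight $2$ in the cube on the left; and one must track the factors of $2$ coming from the polarisation formula when turning $Q$ of a sum into a combination of $Q_x$'s and $Q_{x,y}$'s. No further Jordan identities are invoked beyond the two power relations themselves.
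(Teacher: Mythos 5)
Your proposal is correct and follows exactly the paper's argument: both identities are obtained by substituting $a\mapsto a+\lambda b$ into $Q_a^2=Q_{a^2}$ and $Q_a^3=Q_{a^3}$ and extracting the $\lambda^2$ coefficient. You simply carry out the bookkeeping (the polarisation of $Q$, the non-associative expansion of $(a+\lambda b)^3$, and the factors of $2$) in more detail than the paper does.
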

\begin{proof}
	We know that $Q_a^2 = Q_{a^2}$. We will replace $a$ with $a+\lambda b$ in this equation. Note that $Q_{a+\lambda b}^2 = (Q_a + \lambda^2 Q_b + 2\lambda Q_{a,b})^2$ and $Q_{(a+\lambda b)^2} = Q_{a^2 + \lambda^2 b^2 + 2\lambda ab}$. The desired equation is the $\lambda^2$ term of this equation. By varying $\lambda$ we see that each term separately has to be zero which proves the equation.

	The second equation follows in exactly the same way by collecting the $\lambda^2$ term of the equation $Q_{a+\lambda b}^3 = Q_{(a+\lambda b)^3}$.
\end{proof}

\begin{theorem}\label{theor:fund}
	Let $a$ and $b$ be arbitrary elements in a Jordan algebra. $Q_{Q_a b} = Q_aQ_bQ_a$.
\end{theorem}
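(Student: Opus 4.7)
The plan is to use Lemma \ref{lem:quad} as the central tool: equation \eqref{eq:jordanquad2} already contains the target $Q_aQ_bQ_a$ on its left-hand side together with almost all the pieces of $Q_{Q_a b}$ on its right-hand side. First I would expand $Q_{Q_a b}$ using bilinearity of the polarised operator $Q_{x,y}$. Since $Q_a b = 2a(ab) - a^2 b$ and $Q_{u+v} = Q_u + Q_v + 2Q_{u,v}$, we obtain
$$Q_{Q_a b} \;=\; 4Q_{a(ab)} + Q_{a^2 b} - 4Q_{a(ab),\,a^2 b}.$$
The first two terms appear on the right-hand side of \eqref{eq:jordanquad2} with the correct coefficients, while the cross term differs only in sign (using that $Q_{x,y}$ is symmetric).

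Rearranging \eqref{eq:jordanquad2} and subtracting the above expansion, the difference $Q_aQ_bQ_a - Q_{Q_a b}$ reduces to a finite linear combination of explicit ``error'' terms: $Q_{a^2 b,\,a(ab)}$, $Q_{a^3,\,b(ab)}$, $Q_{a^3,\,b^2 a}$, $Q_a^2 Q_b$, $Q_b Q_a^2$, and the three pieces $Q_{a,b}^2 Q_a$, $Q_a Q_{a,b}^2$, $Q_{a,b}Q_aQ_{a,b}$. The task becomes to show that this combination vanishes as an operator on $V$. I would first apply \eqref{eq:jordanquad1} to eliminate each occurrence of $Q_{a,b}^2$ in favour of $Q_{ab}$, $Q_{a^2,b^2}$, $Q_aQ_b$, and $Q_bQ_a$.

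Next I would expand every remaining $Q_x$ and $Q_{x,y}$ back into multiplication operators using $Q_x = 2T_x^2 - T_{x^2}$ and $Q_{x,y} = T_xT_y + T_yT_x - T_{xy}$, and then repeatedly apply the normalisation equation \eqref{eq:reduce} so that every triple term such as $T_{a(ab)}$, $T_{a^2 b}$, $T_{a^3}$, and $T_{b^2 a}$ is rewritten as a polynomial in the five ``atomic'' operators $T_a$, $T_b$, $T_{ab}$, $T_{a^2}$, $T_{b^2}$. What remains is a large noncommutative polynomial in these generators, which I would push into a normal form (for instance, by fixing a lexicographic order on monomials) using the commutator identities \eqref{eq:jordan}, \eqref{eq:linjordan1}, \eqref{eq:linjordan2}, \eqref{eq:commuteab_aa}, and \eqref{eq:commuteaa_bb} as rewrite rules; at the end every monomial should cancel.

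The main obstacle is the combinatorial explosion of this final reduction: each use of normalisation or a commutator identity spawns several new terms, and selecting a productive sequence of rewrites by hand is both tedious and error-prone. This is exactly the step that is handed off to the Python script of the paper, which applies the identities in a mostly greedy randomised fashion until a normal form is reached. All the genuine algebraic insight is front-loaded into the choice of identities above and the initial expansion of $Q_aQ_bQ_a - Q_{Q_a b}$; the rest is essentially mechanical.
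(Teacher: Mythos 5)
Your proposal matches the paper's proof essentially step for step: the same bilinear expansion $Q_{Q_ab}=Q_{a^2b}+4Q_{a(ab)}-4Q_{a(ab),a^2b}$, the same rearrangement of equation \eqref{eq:jordanquad2} combined with \eqref{eq:jordanquad1} to eliminate $Q_{a,b}^2$, and the same delegation of the final reduction to the randomised rewriting script. The only cosmetic difference is that you describe the last stage as computing a deterministic lexicographic normal form, whereas the paper's script must resort to a randomised greedy search (and notes that a naive direct attack stalls at 12 terms), but since you explicitly hand that step to the paper's tool this is not a substantive divergence.
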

\begin{proof}
	First note that $Q_{Q_a b} = Q_{2a(ab) - a^2b} = Q_{a^2b} + 4Q_{a(ab)} -4Q_{a(ab),a^2b}$ and thus that the righthandside of \eqref{eq:jordanquad2} is equal to $-Q_{Q_a b} + 2Q_{a^2b} + 8 Q_{a(ab)} + 2Q_{a^3,b^2a} + 4Q_{a^3,b(ab)}$.

	On the lefthandside we will use \eqref{eq:jordanquad1} to replace both instances of $4Q_{a,b}^2$ which turns the lefthandside into
	$$-Q_aQ_bQ_a + 4(Q_aQ_{ab}+Q_{ab}Q_a) + 2(Q_aQ_{a^2,b^2} + Q_{a^2,b^2}Q_a) + 4 Q_{a,b}Q_aQ_{a,b}$$
	Equation \eqref{eq:jordanquad2} can therefore be transformed into
	\begin{align*}
		Q_aQ_bQ_a - Q_{Q_a b}  &= 4(Q_aQ_{ab}+Q_{ab}Q_a) + 2(Q_aQ_{a^2,b^2} + Q_{a^2,b^2}Q_a) + Q_{a,b}Q_aQ_{a,b} \\
		&-( 2Q_{a^2b} + 8 Q_{a(ab)} + 2Q_{a^3,b^2a} + 4Q_{a^3,b(ab)}).
	\end{align*}
	It therefore remains to show that the righthandside of this equation is zero. This can be done with a clever combination of the normalisation equation and the commutator identities as is showing using the Python script described in the next section.
\end{proof}
	
\section{Automated rewriting}
Since the final equation of the previous section is rather involved we use an automated tool to reduce it to zero. This tool takes the expression and applies certain rewrites to it. The normalisation equation \eqref{eq:reduce} gives the rewrite rule
\begin{equation}\label{rewrite:normalise}
    T_{a(bc)} \quad \rightarrow\quad T_aT_{bc} + T_bT_{ac} + T_cT_{ab} - T_bT_aT_c - T_cT_aT_b,
\end{equation}
and the Jordan equation \eqref{eq:jordan} gives the rewrites
\begin{equation}\label{rewrite:jordan}
\begin{aligned}
    T_aT_{a^2} &\quad \rightarrow \quad T_{a^2}T_a \\
    T_bT_{b^2} &\quad \rightarrow \quad T_{b^2}T_b 
\end{aligned}
\end{equation}
while equations \eqref{eq:linjordan1}, \eqref{eq:commuteab_aa} and \eqref{eq:commuteaa_bb} give the rewrite rules
\begin{equation}\label{rewrite:all}
\begin{aligned}
    T_bT_{a^2} &\quad\rightarrow\quad T_{a^2}T_b + 2T_{ab}T_a - 2T_aT_{ab} \\
        T_{a^2}T_b &\quad\rightarrow\quad T_bT_{a^2} - 2T_{ab}T_a + 2T_aT_{ab} \\
        T_{ab}T_a &\quad\rightarrow\quad T_aT_{ab} + \frac{1}{2}T_bT_{a^2} - \frac{1}{2}T_{a^2}T_b \\
        T_aT_{ab} &\quad\rightarrow\quad T_{ab}T_a - \frac{1}{2}T_bT_{a^2} + \frac{1}{2}T_{a^2}T_b \\
        T_{ab}T_{a^2} &\quad\rightarrow\quad T_{a^2}T_{ab} + 2(T_a^2[T_a,T_b] + [T_a,T_b]T_a^2 -[T_aT_{a^2}, T_b]) \\
        T_{a^2}T_{ab} &\quad\rightarrow\quad T_{ab}T_{a^2} - 2(T_a^2[T_a,T_b] + [T_a,T_b]T_a^2 -[T_aT_{a^2}, T_b]) \\
        T_{a^2}T_{b^2} &\quad\rightarrow\quad T_{b^2}T_{a^2} + 4 (T_bT_aT_bT_a - T_aT_bT_aT_b + T_aT_{ab}T_b - T_bT_{ab}T_a) \\
        T_{b^2}T_{a^2} &\quad\rightarrow\quad T_{a^2}T_{b^2} - 4 (T_bT_aT_bT_a - T_aT_bT_aT_b + T_aT_{ab}T_b - T_bT_{ab}T_a) \\
\end{aligned}
\end{equation}

The tool applies the following strategy:
\begin{enumerate}
    \item Apply the normalisation rewrite \eqref{rewrite:normalise} in no particular order until it can no longer be applied, and apply the commutation rewrites \eqref{rewrite:jordan} until they can no longer be applied.
    \item Count the number of terms in the expression. Do the following a hundred times: Randomly apply a rewrite rule from \eqref{rewrite:all} anywhere in the expression. Apply rewrites \eqref{rewrite:jordan} until they can no longer be applied. Check how many terms are left. If the amount of terms is lower than it has been since starting the set of 100 rewrites, store the current expression for the next step.
    \item If the expression has been reduced to zero we are done. If it is not zero, reset the expression to the one found in the previous step that had the least amount of terms and repeat the previous step.
\end{enumerate}

Since we have proven that all the rewrite rules come from equalities that hold in any Jordan algebra we know that if the tool manages to reduce an expression to zero that the expression must also be zero in any Jordan algebra.

A demonstration of how this works. We will first show how it is able to represent all the necessary components. Note that 'a', 'b', 'aa', 'ab', etc. are predefined objects corresponding to the terms used above, while 'Q' is a function that acts like the quadratic representation.
First, we can represent methods by how they would act on an element 'c' in a special Jordan algebra:
\begin{python}
    >>> to_special(a)
    {'ac': 0.5, 'ca': 0.5}
    >>> dict_to_expr(to_special(a))
    '0.5 ac +0.5 ca'
    >>> Q(a).to_latex()
    '2 L_aL_a - L_{a^2}'
    >>> to_special(Q(a))
    {'aca': 1.0}
\end{python}
We can use this functionality to check that our normalisations and rewrites do what we expect them to do:
\begin{python}
    >>> Q(aa).to_latex()
    '- L_{(aa)(aa)} +2 L_{a^2}L_{a^2}'
    >>> to_special(Q(aa))
    {'aacaa': 1.0}
    >>> l = Q(aa)
    >>> l.normalise()
    Normalisations applied: 2
    >>> l.to_latex()
    '4 L_aL_aL_aL_a -4 L_{a^2}L_aL_a+ L_{a^2}L_{a^2}'
    >>> to_special(l)
    {'aacaa': 1.0}
\end{python}
And we can also use it to verify that the equations we have derived are correct. For instance, recall equation \eqref{eq:jordanquad1}: $4Q_{a,b}^2 = 4Q_{ab} + 2Q_{a^2,b^2} -Q_aQ_b - Q_bQ_a$.
\begin{python}
    >>> l = 4*Q(a,b)**2
    >>> to_special(l)
    {'bbcaa': 1.0, 'abcab': 1.0, 'bacba': 1.0, 'aacbb': 1.0}
    >>> r = 4*Q(ab) + 2 * Q(aa,bb) - Q(a)*Q(b) - Q(b)*Q(a)
    >>> to_special(r)
    {'abcab': 1.0, 'bacba': 1.0, 'bbcaa': 1.0, 'aacbb': 1.0}
    >>> to_special(l) == to_special(r)
    True
\end{python}
And in fact we can prove that they are equal using the algorithm described above:
\begin{python}
    >>> h = l-r
    >>> h.normalise() #does step 1
    Normalisations applied: 5
    >>> len(h.terms)
    15
    >>> print(h)
    2 [a,a,bb] -4 [a,ab,b] +4 [a,b,a,b] +4 [a,b,ab] 
    -2 [aa,b,b]+ [aa,bb] -4 [ab,a,b] -4 [ab,b,a] 
    +12 [b,a,ab] -4 [b,a,b,a] -4 [b,aa,b] -4 [b,ab,a] 
    +6 [b,b,aa] -2 [bb,a,a]- [bb,aa]
    >>> h.iterative_reduce() #does steps 2 and 3
    New minimal amount of terms: 14
    New minimal amount of terms: 9
    New minimal amount of terms: 6
    New minimal amount of terms: 4
    New minimal amount of terms: 0
\end{python}
But as established in the previous section, the expression we really want to be able to reduce to zero is the following:
$$4(Q_aQ_{ab}+Q_{ab}Q_a) + 2(Q_aQ_{a^2,b^2} + Q_{a^2,b^2}Q_a) + Q_{a,b}Q_aQ_{a,b}
    -( 2Q_{a^2b} + 8 Q_{a(ab)} + 2Q_{a^3,b^2a} + 4Q_{a^3,b(ab)}).$$
So let's see if it works:
\begin{python}
    >>> l = (2*(Q(aa,bb)*Q(a) + Q(a)*Q(aa,bb)) +
             4*(Q(ab)*Q(a) + Q(a)*Q(ab) + Q(a,b)*Q(a)*Q(a,b)))
    >>> r = 2*Q(aa*b) + 8*Q(ab*a) + 2*Q(aa*a,bb*a) + 4*Q(aa*a,ab*b)
    >>> h = l-r
    >>> to_special(h) #verifying that this expression should indeed rewrite to zero
    {}
    >>> h.normalise()
    Normalisations applied: 93
    >>> len(h.terms)
    75
    >>> h.iterative_reduce()
    New minimal amount of terms: 73
    New minimal amount of terms: 71
    New minimal amount of terms: 70
    New minimal amount of terms: 68
    ...
    ...
    New minimal amount of terms: 11
    New minimal amount of terms: 10
    New minimal amount of terms: 8
    New minimal amount of terms: 4
    New minimal amount of terms: 0
\end{python}
So the simple algorithm described above is indeed enough to prove the fundamental equality with the reductions we have made. An obvious question to ask is if the manual rewrites we did in lemma \ref{lem:quad} and theorem \ref{theor:fund} were necessary. Couldn't we just put the fundamental equality directly in this program and derive its correctness? If we expand $Q_{Q_a b}$ as $Q_{a^2b} + 4Q_{a(ab)} -4Q_{a(ab),a^2b}$ we can represent the fundamental equality directly in the program:
\begin{python}
    >>> l = Q(aa*b)+4*Q(a*ab) - 4*Q(a*ab,aa*b)
    >>> r = Q(a)*Q(b)*Q(a)
    >>> h = l-r
    >>> h.normalise()
    Normalisations applied: 55
    >>> len(h.terms)
    59
    >>> r = h.iterative_reduce()
    New minimal amount of terms: 56
    New minimal amount of terms: 54
    ...
    New minimal amount of terms: 14
    New minimal amount of terms: 12
\end{python}
The algorithm gets stuck at 12 terms\footnote{Even though the rewrites are random, it always seems to get stuck at this particular expression.}. Scalar multiplying r by $0.5$ and printing the latex output gives:
\begin{align*}
    &L_aL_aL_aL_aL_{b^2}+ L_aL_bL_aL_bL_{a^2}- L_aL_{ab}L_bL_{a^2}+ L_bL_aL_bL_{a^2}L_a- L_bL_{ab}L_{a^2}L_a-L_{a^2}L_aL_aL_{b^2} \\
    -& L_{a^2}L_aL_bL_aL_b+ L_{a^2}L_aL_{ab}L_b- L_{a^2}L_bL_aL_bL_a+ L_{a^2}L_bL_{ab}L_a- L_{b^2}L_aL_aL_aL_a+ L_{b^2}L_{a^2}L_aL_a
\end{align*}
This is an equation that should reduce to zero, but which the algorithm doesn't seem to be able to handle. Unfortunately, I haven't been able to reduce this expression to zero by hand either, which is why the extra steps in theorem \ref{theor:fund} were necessary.
\bibliographystyle{plain}
\bibliography{bibliography}

\section{Source code listing}
Below is listed the full source code for producing the reductions above. Also available on \cite{sourcecode}.

\begin{python}
import random

# Convenience Methods
def dict_add(d,k,v):
    '''Adds values v to d[k] with a default of d[k]=0'''
    if k in d: d[k]+= v
    else: d[k] = v
def dict_remove_zeroes(d):
    '''Remove all keys of d if the value is very close to zero'''
    for k,v in list(d.items()):
        if abs(v)<0.000001:
            del d[k]
def dict_to_expr(d):
    '''takes in a dictionary where keys are strings,
    and values are numbers and prints it as a sum.
    e.g.: {"a": 1, "b": -1, "c": 2} -> a - b + 2c'''
    if not d:
        return "0"
    s = ""
    sort = sorted(d.keys())
    for term in sort:
        v = d[term]
        if abs(v-round(v))<0.00001:
            v = round(v)
        if v==1: s+= "+ " + term
        elif v==-1: s+= "- " + term
        else: s+= " {0:+} {1}".format(v, term)
    s = s.strip()
    if s[0] == "+": return s[1:].strip()
    return s

class Base(object):
    '''Base class implementing some
    boilerplate arithmetic functions'''
    def copy(self):
        return self
    def __add__(self, other):
        r = self.copy()
        r += other
        return r
    def __neg__(self):
        r = self.copy()
        return -1*r
    def __sub__(self, other):
        r = self + (-other)
        return r
    def __pow__(self, n):
        if not isinstance(n,int) or n<=0:
            raise Exception("Can only raise to a positive int")
        r = self
        for i in range(n-1):
            r = r*self
        return r
    def __hash__(self):
        return hash(str(self))
    def __eq__(self, other):
        return str(self)==str(other)
    def __repr__(self):
        return str(self)
    def to_latex(self):
        return str(self)

class JordanMonomialBase(Base):
    '''Base Class for representing the content of
    Jordan multiplication operators, e.g. 'ab' in T_{ab}'''
    def __mul__(self, other):
        return JMProduct(self, other)

    def __rmul__(self, other):
        if isinstance(other, (int,float)):
            return other * Words(self)
        if isinstance(other, JordanMonomialBase):
            return JMProduct(self, other)
        raise NotImplementedError

    def __add__(self, other):
        if isinstance(other, Words):
            other.add(1, (self,))
            return other
        return Words(self) + Words(other)

    def is_normalised(self):
        return True

class JMSingle(JordanMonomialBase):
    '''Used for representing T_a, and T_b, e.g. where
    the content of the operator is a single variable'''
    def __init__(self, variable_name):
        self._variable_name = variable_name

    def __str__(self):
        return self._variable_name

    def to_special(self):
        d = {self._variable_name: 1}
        return d

class JMProduct(JordanMonomialBase):
    '''Used for representing T_{LR} where L and R are
    other instances of JordanMonomialBase'''
    def __init__(self, L, R):
        if isinstance(L, str):
            L = JMSingle(L)
        if isinstance(R, str):
            R = JMSingle(R)

        self._L = L
        self._R = R

    def __str__(self):
        ltext = None
        rtext = None
        if isinstance(self._L, JMSingle):
            ltext = str(self._L)
        else:
            ltext = "({})".format(str(self._L))
        if isinstance(self._R, JMSingle):
            rtext = str(self._R)
        else:
            rtext = "({})".format(str(self._R))
        if ltext<rtext:
            return ltext+rtext
        else:
            return rtext+ltext

    def to_latex(self):
        s = str(self)
        if s=="aa": return "a^2"
        if s=="bb": return "b^2"
        return s

    def to_special(self):
        '''L=a, R=b, then returns 1/2ab + 1/2ba (or its dict equivalent)'''
        l = self._L.to_special()
        r = self._R.to_special()
        result = {}
        for a,t in l.items():
            for b,s in r.items(): #a and b are strings, a+b is str concat
                dict_add(result, a+b, t*s*0.5)
                dict_add(result, b+a, s*t*0.5)
        dict_remove_zeroes(result)
        return result

    def is_normalised(self):
        '''Wether the normalisation equation can be applied'''
        return (isinstance(self._L, JMSingle) and isinstance(self._R,JMSingle))

    def normalise_step(self):
        '''The product operator is of the form T_{a(bc)}. 
        Apply the normalisation rewrite and return the result'''
        if not isinstance(self._R,JMProduct):
            L = self._R
            R = self._L
        else:
            L = self._L
            R = self._R
        
        a = L
        b = R._L
        c = R._R
        result = W(a,b*c) + W(b,a*c) + W(c,a*b) - W(b,a,c) - W(c,a,b)
        return result

a = JMSingle("a")  # T_a
b = JMSingle("b")  # T_b
ab = a*b  # T_{ab}
aa = a*a  # T_{a^2}
bb = b*b  # T_{b^2}

class Words(Base):
    '''Class that can represent linear combinations of words
    of JM terms like 2T_aT_{b(ab)} - 3T_b.
    A synonym of this class is defined below as W=Words,
    because the name is used many times.'''
    def __init__(self, *term):
        self.terms = {}
        self.is_unit = True #whether the class represent the identity, e.g. whether it is empty or not
        if term:
            self.terms[Words.normalise_word(term)] = 1
            self.is_unit=False

    def __len__(self):
        return len(self.terms)

    def copy(self):
        r = Words()
        r.terms = {Words.normalise_word(k):v for k,v in self.terms.copy().items()}
        r.is_unit = self.is_unit
        return r

    @staticmethod
    def normalise_word(word):
        ''''makes sure that T_{a^2} terms always appear before T_a terms (and the same for b)'''
        r = list(word)
        while True:
            for i in range(len(word)-1):
                if ((r[i] == a and r[i+1] == aa) or
                    (r[i] == b and r[i+1] == bb)):
                    tmp = r[i]
                    r[i] = r[i+1]
                    r[i+1] = tmp
                    break
            else: #not broken out of loop, so no more rewrites
                break
        return tuple(r)
        
    def __str__(self):
        d = {}
        for k in self.terms:
            d[self._format_word(k)] = self.terms[k]
        return dict_to_expr(d)

    def _format_word(self, word):
        return "[" + ','.join([str(item) for item in word]) + "]"

    def to_latex(self):
        '''Formats the expression suitable for LaTeX output'''
        d = {}
        for k in self.terms:
            d[self._format_word_latex(k)] = self.terms[k]
        return dict_to_expr(d)

    def _format_word_latex(self,word):
        return "".join(["L_{}".format(("{"+item.to_latex()+"}") if len(item.to_latex())!=1
                                      else item.to_latex()) for item in word])

    def add(self, scalar, word):
        '''Adds scalar amount of the specified word to the expression'''
        w = Words.normalise_word(word)
        dict_add(self.terms,w,scalar)
        if abs(self.terms[w])<0.0001:
            del self.terms[w]
    
    def __iadd__(self, other):
        if other.is_unit:
            raise Exception("adding unit Words to another Words")
        for word, scalar in other.terms.items():
            self.add(scalar, word)
        return self

    def __mul__(self, other):
        if isinstance(other, JordanMonomialBase):
            other = Words(other)
        if other.is_unit: return self
        if self.is_unit: return other
        result = Words()
        result.is_unit = False
        for w1, s1 in self.terms.items():
            for w2, s2 in other.terms.items():
                result.add(s1*s2, w1+w2)
        return result

    def scalar_mult(self, val):
        for term in self.terms: self.terms[term] *= val

    def __rmul__(self,other):
        if isinstance(other, (int,float)):
            r = self.copy()
            r.scalar_mult(other)
            return r
        raise NotImplementedError

    def normalise(self):
        '''Applies the normalisation equation until it can no longer be applied'''
        amount = 0
        while True:
            word, i = self._find_normalisable_word()
            if word==None:
                print("Normalisations applied: " + str(amount))
                return
            reduced = word[i].normalise_step()
            amount += 1

            val = self.terms[word]
            reduced.scalar_mult(val)
            del self.terms[word]
            newterm = Words(*word[:i]) * reduced * Words(*word[i+1:])
            self += newterm

    def _find_normalisable_word(self):
        for word in self.terms:
            for i in range(len(word)):
                if not word[i].is_normalised():
                    return (word, i)
        return None, None

    def get_random_rewrite(self):
        possibilities = []
        t = list(self.terms.keys())
        random.shuffle(t)
        random.shuffle(rewrites)
        for term in t:
            for A,B,C in rewrites:
                if A in term and B in term:
                    #find all the matches of (A,B) in term
                    matches = [i for i in range(len(term)-1) if (A,B)==term[i:i+2]]
                    if matches:
                        return (term, random.sample(matches,1)[0], C)           
        return None

    def do_rewrite_step(self, reduc):
        term,index,rewrite = reduc
        new = W(*term[:index]) * (self.terms[term]*rewrite) * W(*term[index+2:])
        del self.terms[term]
        self += new

    def do_random_rewrites(self, N=100, list_rewrites=False, silent=False):
        '''Does a specified amount of random rewrites. Returning the expression
        with the minimal amount of terms it has found in its random path.
        If list_rewrites is True it also outputs the rewrites it has done'''
        minterms = len(self.terms)
        best_so_far = self.copy()
        path = []
        bestpath = []
        for n in range(1,N):
            reduc = self.get_random_rewrite()
            if not reduc: #no reduction possible, so we are done
                if list_rewrites: return self, bestpath
                else: return self
            self.do_rewrite_step(reduc)
            path.append(reduc)
            if len(self.terms)< minterms:
                minterms = len(self.terms)
                if not silent: print("New minimal amount of terms: " + str(minterms))
                best_so_far = self.copy()
                bestpath = path.copy()
            if n%1000 == 0:
                if not silent: print("At iteration " + str(n))

        if list_rewrites: return best_so_far,bestpath
        else: return best_so_far
        
    def iterative_reduce(self, iterations=100):
        '''For a specified amount of iterations, do 100 random reductions and remember
         the shortest expression. Then repeat the process with this expression'''
        r = self.copy()
        shortest = len(self.terms)
        for i in range(iterations):
            r = r.do_random_rewrites(100)
            if len(r) < shortest:
                shortest = len(r)
                if len(r)==0: break
        return r

    def greedy_reduce(self):
        '''Tries 500 random reductions, and remembers the best one.
        It keeps doing this to reduce the expression as 'greedily' as possible.
        If it gets stuck, it tries to do 2 reductions at once, and then 3, etc.
        It returns the set of reductions it has found.
        '''
        path = []
        r = self.copy()
        r.normalise()
        temperature = 1
        while len(r.terms):
            reductions = []
            best = r
            length = len(r.terms)
            for i in range(500):
                r2 = r.copy()
                r3, reduc = r2.do_random_rewrites(temperature,True,True)
                if len(r3.terms)<length:
                    reductions = reduc
                    best = r3
                    length = len(r3.terms)
            if reductions:
                print("found new step. {!s} terms left".format(length))
                r = best
                path.extend(reductions)
                temperature = 1
            else:
                temperature += 1
            
        return path

W = Words

def Q(a,b=None):
    if b: return W(a,b) + W(b,a) - W(a*b)
    return 2*W(a,a) - W(a*a)

# list of tuples (A,B,C) with the understanding that AB
# may be interchanged with BA + C or BA with AB - C
rewrite_base = [
    (b, ab, 0.5*(W(bb,a) - W(a,bb))),
    (a, ab, 0.5*(W(aa,b) - W(b,aa))),
    (a, bb, 2*(W(ab,b) - W(b,ab))),
    (b, aa, 2*(W(ab,a) - W(a,ab))),
    (ab,aa, -2*(W(b,a,a,a)-W(a,a,a,b))-2*(W(aa,a,b)-W(b,aa,a))-2*(W(a,a,b,a)-W(a,b,a,a))),
    (ab,bb, -2*(W(a,b,b,b)-W(b,b,b,a))-2*(W(bb,b,a)-W(a,bb,b))-2*(W(b,b,a,b)-W(b,a,b,b))),
    (aa,bb, 4*(W(b,a,b,a) - W(a,b,a,b) + W(a,ab,b) - W(b,ab,a))),
    ]

# add reverse rewrites
r = []
for A,B,C in rewrite_base:
    r.append((B,A, -C))

rewrite_base.extend(r)
del r
rewrites = []
for A,B,C in rewrite_base:
    rewrites.append((A, B, W(B,A) + C))

def verify_rewrites():
    for A,B, l in rewrite_base:
        h = W(A,B)-W(B,A) - l
        print(to_special(h))

def to_special(l,name="c"):
    if not isinstance(l, W):
        l = W(l)
        
    result = {}
    for word in l.terms:
        w = list(reversed(word))
        r = {name:1}
        for t in w:
            r2 = {}
            for a, s1 in t.to_special().items():
                for k,v in r.items():
                    dict_add(r2,a+k,0.5*v*s1)
                    dict_add(r2,k+a,0.5*v*s1)
            r = r2
        for k,v in r.items():
            dict_add(result, k, v*l.terms[word])
            
    dict_remove_zeroes(result)
    return result

def fundamental_ident():
    l = l = Q(aa*b)+4*Q(a*ab) - 4*Q(a*ab,aa*b) # Q_{Q_a b}
    r = Q(a)*Q(b)*Q(a)
    h = l-r
    print(to_special(h))  # Verify h should indeed be reducable to zero
    return h

def equation7():
    l = 4*Q(a,b)**2
    r = 4*Q(ab) + 2*Q(aa,bb) - Q(a)*Q(b) - Q(b)*Q(a)
    h = l-r
    print(to_special(h))
    return h
def equation8(): # Equation (8)
    l = Q(a)*Q(b)*Q(a) + Q(aa)*Q(b) + Q(b)*Q(aa)\
        + 4*(Q(a,b)*Q(a,b)*Q(a)+Q(a,b)*Q(a)*Q(a,b) + Q(a)*Q(a,b)*Q(a,b))
    r = Q(aa*b)+4*Q(ab*a) + 4*Q(aa*b,ab*a) + 2*Q(aa*a,bb*a) + 4*Q(aa*a,ab*b)
    h = l-r
    print(to_special(h))
    return h
def theorem(): # Reducing this to zero proves the fundamental equality
    l = (2*(Q(aa,bb)*Q(a) + Q(a)*Q(aa,bb)) 
         + 4*(Q(ab)*Q(a) + Q(a)*Q(ab) + Q(a,b)*Q(a)*Q(a,b)))
    r = 2*Q(aa*b) + 8*Q(ab*a) + 2*Q(aa*a,bb*a) + 4*Q(aa*a,ab*b)
    h = l-r
    print(to_special(h))
    return h

def theorem2(): # Reducing this to zero proves the fundamental equality - but with less manual work.
    l = (Q(aa)*Q(b) + Q(b)*Q(aa) + 4*(Q(a,b)**2*Q(a)
        + Q(a,b)*Q(a)*Q(a,b) + Q(a)*Q(a,b)**2))
    r = 8*Q(aa*b,a*ab) + 2*Q(aa*a,bb*a) + 4*Q(aa*a,ab*b)
    h = l-r
    print(to_special(h))
    return h

\end{python}

\end{document}